\tikzset{
    >=stealth',
    punkt/.style={
           rectangle,
           rounded corners,
           draw=black, very thick, 
           text width=8em,
           minimum height=2em,
           text centered, fill=white, drop shadow},
    punkta/.style={
           rectangle,
           rounded corners,
           draw=black, very thick,
           text width=10em,
           minimum height=2em,
           text centered, fill=white, drop shadow},
    punktaka/.style={
           rectangle,
           rounded corners,
           draw=black, very thick,
           text width=14em,
           minimum height=2em,
           text centered, fill=white, drop shadow},       
    punktaa/.style={
           rectangle,
           rounded corners,
           draw=black, very thick,
           text width=15em,
           minimum height=2em,
           text centered, fill=white, drop shadow},
    punktaaa/.style={
           rectangle,
           rounded corners,
           draw=black, very thick,
           text width=10em,
           minimum height=2em,
           text centered, fill=white, drop shadow},
    pil/.style={
           ->,
           thick,
           shorten <=2pt,
           shorten >=2pt,}
}
\definecolor{mycolor}{rgb}{0.122, 0.435, 0.698}
\newcommand{\warsawApp}[2] 
{
\FPeval{\points}{4-((#1)/2)}
\begin{tikzpicture}[scale=5,domain=0:1] 
\tikzstyle{every node}=[circle, draw, fill=black!50,
                        inner sep=0pt, minimum width=\points pt]
\FPeval{\step}{1/2^((#1))} 
\FPeval{\twostep}{1/2^(2*((#1)))}
\FPeval{\twostepone}{1/2^(2*((#1))-1)}
\FPeval{\twosteptwo}{1/2^(2*((#1))-2)}
\FPeval{\stepone}{1/2^(((#1))-1)}
\FPeval{\B}{(1/2)+(1/2^(((#1))-1))}
\FPeval{\A}{(1/2)-(1/2^(((#1))-1))}
\FPeval{\C}{(1/2)+(1/2^(((#1))))}
\FPeval{\D}{1-(1/2^((#1)))}
\FPeval{\E}{1/2+(3/2^((#1)))}
\FPeval{\kminustwo}{((#1))-1}
\pgfmathsetmacro{\puntosa}{int((((#1))-1)/2)}
\pgfmathsetmacro{\puntosb}{int((((#1))-2)/2)}
\ifthenelse{#2=1}{
								\draw[step=\stepone,gray,very thin] (-0.1,-0.1) grid (1.1,1.1);
							  }{}	
\draw (0,1)--(0,0)--(1,0)--(1,1)--(1/2,1)--(1/2,1/2)--(1/4,1/2)--(1/4,1)--(1/8,1)--
(1/8,1/2)--(1/16,1/2)--(1/16,1)--(1/32,1)--(1/32,1/2)--(1/64,1/2)--(1/64,1);
\draw
\foreach \x in {0,\stepone,...,1} {(\x,0) node {} }

\foreach \x in {0,\stepone,...,\A} {(0,\x) node {} }
\foreach \x in {0,\stepone,...,\A} {(1,\x) node {} }

\foreach \l in {0,1,...,\kminustwo} {\foreach \m in {0.5,\B,...,1} {(1/2^\l,\m) node {} }}
\foreach \m in {0.5,\B,...,1} {(0,\m) node {} };

\foreach \x in {1,...,\puntosa} {
													\pgfmathsetmacro{\u}{1/2^(2*\x-1)}
													\pgfmathsetmacro{\v}{1/2^(2*\x-1)+\stepone}
													\pgfmathsetmacro{\w}{1/2^(2*\x-2)}
													\foreach \z in {\u,\v,...,\w} {																								\draw node at (\z,1) {} ;
																								}
										          }
										          
\ifthenelse{#1 > 3}{
                             \foreach \x in {1,...,\puntosb} {
													                          \pgfmathsetmacro{\u}{1/2^(2*\x)}
													                          \pgfmathsetmacro{\v}{1/2^(2*\x)+\stepone}
													                          \pgfmathsetmacro{\w}{1/2^(2*\x-1)}
													                          \foreach \z in {\u,\v,...,\w} {
																								                          \draw node at (\z,0.5) {} ;
																								                         }
										                                     }
							   }{}			                               			
\ifthenelse{#2=0}{
							\foreach \m in {\C,\E,...,\D} {\draw node at (\step,\m) {} ;}	
							}{}
\end{tikzpicture}}
\newcommand{\warsawPol}[2] 
{
\FPeval{\points}{4-((#1)/2)}
\begin{tikzpicture}[scale=#2,domain=0:1] 
\tikzstyle{every node}=[circle, draw, fill=black!50,
                        inner sep=0pt, minimum width=\points pt]
\FPeval{\step}{1/2^((#1))} 
\FPeval{\twostep}{1/2^(2*((#1)))}
\FPeval{\twostepone}{1/2^(2*((#1))-1)}
\FPeval{\twosteptwo}{1/2^(2*((#1))-2)}
\FPeval{\stepone}{1/2^(((#1))-1)}
\FPeval{\steptwo}{1/2^(((#1))-2)}
\FPeval{\B}{(1/2)+(1/2^(((#1))-1))}
\FPeval{\BB}{(1/2)+(1/2^(((#1))-2))}
\FPeval{\A}{(1/2)-(1/2^(((#1))-1))}
\FPeval{\C}{(1/2)+(1/2^(((#1))))}
\FPeval{\D}{1-(1/2^((#1)))}
\FPeval{\E}{1/2+(3/2^((#1)))}
\FPeval{\F}{1-(1/2^(((#1))-1))}
\FPeval{\kminustwo}{((#1))-1}
\pgfmathsetmacro{\puntosa}{int((((#1))-1)/2)}
\pgfmathsetmacro{\puntosb}{int((((#1))-2)/2)}
\pgfmathsetmacro{\puntosc}{int((((#1))+1)/2)}
\draw (0,1)--(0,0)--(1,0)--(1,1);
\foreach \x in {1,...,\puntosa}{\pgfmathsetmacro{\u}{1/2^(2*\x-1)}
											     \pgfmathsetmacro{\w}{1/2^(2*\x-2)}
												  \draw (\u,1)--(\w,1);
											   }
\foreach \x in {1,...,\puntosc}{\pgfmathsetmacro{\u}{1/2^\x}										   
												  \draw (\u,1)--(\u,0.5);
											   }
\ifthenelse{#1 > 3}{
\foreach \x in {1,...,\puntosb} {\pgfmathsetmacro{\u}{1/2^(2*\x)}
													\pgfmathsetmacro{\w}{1/2^(2*\x-1)}
													\draw (\u,0.5)--(\w,0.5);
										        }	
							    }{}

\draw (\steptwo,0.5)--(\steptwo,1);	
\draw (\stepone,0.5)--(\stepone,1);
\foreach \x in {0.5,\B,...,1}{\draw (\stepone,\x)--(\steptwo,\x);}	
\draw (0,0.5)--(\stepone,0.5);
\draw (0,1)--(\stepone,1);		
\draw (\step,\C)--(\step,\D);
\ifthenelse{#1 > 3}{
\foreach \m in {\C,\E,...,\F}  {
													\pgfmathsetmacro{\mstepone}{\m+\stepone}
													\pgfmathsetmacro{\mstep}{\m+\step}
													
													\draw [fill=gray, fill opacity=0.85] (\step,\m)--(0,\mstep)--
													(\step,\mstepone)--(\stepone,\mstep)--(\step,\m);																					
									          }	
							   }{
							     \pgfmathsetmacro{\mstepone}{\C+\stepone}
								 \pgfmathsetmacro{\mstep}{\C+\step}
													
									\draw [fill=gray, fill opacity=0.85] (\step,\C)--(0,\mstep)--
													(\step,\mstepone)--(\stepone,\mstep)--(\step,\C);			
								}		
\draw (0,0.5)--(\step,\C)--(\stepone,0.5);	
\draw (0,1)--(\step,\D)--(\stepone,1);
\foreach \x in {\B,\BB,...,\F}{\draw[thin, densely dotted] (0,\x)--(\stepone,\x);}
	 							          	
\draw [fill=gray, fill opacity=0.5]
       (0,0.5) -- (\stepone,0.5) -- (\stepone,1) -- (0,1)--(0,0.5);					       											
\draw
\foreach \x in {0,\stepone,...,1} {(\x,0) node {} }

\foreach \x in {0,\stepone,...,\A} {(0,\x) node {} }
\foreach \x in {0,\stepone,...,\A} {(1,\x) node {} }

\foreach \l in {0,1,...,\kminustwo} {\foreach \m in {0.5,\B,...,1} {(1/2^\l,\m) node {} }}
\foreach \m in {0.5,\B,...,1} {(0,\m) node {} };

\foreach \x in {1,...,\puntosa} {
													\pgfmathsetmacro{\u}{1/2^(2*\x-1)}
													\pgfmathsetmacro{\v}{1/2^(2*\x-1)+\stepone}
													\pgfmathsetmacro{\w}{1/2^(2*\x-2)}
													\foreach \z in {\u,\v,...,\w} {																								\draw node at (\z,1) {} ;
																								}
										          }
\ifthenelse{#1 > 3}{										          
\foreach \x in {1,...,\puntosb} {
													\pgfmathsetmacro{\u}{1/2^(2*\x)}
													\pgfmathsetmacro{\v}{1/2^(2*\x)+\stepone}
													\pgfmathsetmacro{\w}{1/2^(2*\x-1)}
													\foreach \z in {\u,\v,...,\w} {																								\draw node at (\z,0.5) {} ;
																								}
										          }		
								}{}	          	
\foreach \m in {\C,\E,...,\D} {\draw node at (\step,\m) {} ;}				          
\end{tikzpicture}}
\newcommand{\warsawTdPol}[2]
{
\FPeval{\points}{4-((#1)/2)}
\begin{tikzpicture}[scale=#2,line join=bevel,x=5,y=5,z=3,rotate =0]
\tikzstyle{every node}=[circle, draw, fill=black!50,
                        inner sep=0pt, minimum width=\points pt]                        
\FPeval{\step}{1/2^((#1))} 
\FPeval{\twostep}{1/2^(2*((#1)))}
\FPeval{\twostepone}{1/2^(2*((#1))-1)}
\FPeval{\twosteptwo}{1/2^(2*((#1))-2)}
\FPeval{\stepone}{1/2^(((#1))-1)}
\FPeval{\steptwo}{1/2^(((#1))-2)}
\FPeval{\B}{(1/2)+(1/2^(((#1))-1))}
\FPeval{\BB}{(1/2)+(1/2^(((#1))-2))}
\FPeval{\A}{(1/2)-(1/2^(((#1))-1))}
\FPeval{\C}{(1/2)+(1/2^(((#1))))}
\FPeval{\D}{1-(1/2^((#1)))}
\FPeval{\H}{1-(1/2^((#1)))-\step}
\FPeval{\E}{1/2+(3/2^((#1)))}
\FPeval{\F}{1-(1/2^(((#1))-1))}
\FPeval{\kminustwo}{((#1))-1}
\pgfmathsetmacro{\puntosa}{int((((#1))-1)/2)}
\pgfmathsetmacro{\puntosb}{int((((#1))-2)/2)}
\pgfmathsetmacro{\puntosc}{int((((#1))+1)/2)}
\FPeval{\alt}{\stepone}
\draw (0,0,1)--(0,0,0)--(1,0,0)--(1,0,1);
\foreach \x in {1,...,\puntosa}{\pgfmathsetmacro{\u}{1/2^(2*\x-1)}
											     \pgfmathsetmacro{\w}{1/2^(2*\x-2)}
												  \draw (\u,0,1)--(\w,0,1);
											   }
\foreach \x in {1,...,\puntosc}{\pgfmathsetmacro{\u}{1/2^\x}										   
												  \draw (\u,0,1)--(\u,0,0.5);
											   }
\ifthenelse{#1 > 3}{
\foreach \x in {1,...,\puntosb} {\pgfmathsetmacro{\u}{1/2^(2*\x)}
													\pgfmathsetmacro{\w}{1/2^(2*\x-1)}
													\draw (\u,0,0.5)--(\w,0,0.5);
										        }	
								}{}		        
\draw (\steptwo,0,0.5)--(\steptwo,0,1);	
\draw (\stepone,0,0.5)--(\stepone,0,1);
\foreach \x in {0.5,\B,...,1}{\draw (\stepone,0,\x)--(\steptwo,0,\x);}	
\draw (0,0,0.5)--(\stepone,0,0.5);
\draw (0,0,1)--(\stepone,0,1);		
\draw
\foreach \x in {0,\stepone,...,1} {(\x,0,0) node {} }

\foreach \x in {0,\stepone,...,\A} {(0,0,\x) node {} }
\foreach \x in {0,\stepone,...,\A} {(1,0,\x) node {} }

\foreach \l in {0,1,...,\kminustwo} {\foreach \m in {0.5,\B,...,1} {(1/2^\l,0,\m) node {} }}
\foreach \m in {0.5,\B,...,1} {(0,0,\m) node {} };

\foreach \x in {1,...,\puntosa} {
													\pgfmathsetmacro{\u}{1/2^(2*\x-1)}
													\pgfmathsetmacro{\v}{1/2^(2*\x-1)+\stepone}
													\pgfmathsetmacro{\w}{1/2^(2*\x-2)}
													\foreach \z in {\u,\v,...,\w} {																								\draw node at (\z,0,1) {} ;
																								}
										          }		
\ifthenelse{#1 > 3}{										          							          
 \foreach \x in {1,...,\puntosb} {
													\pgfmathsetmacro{\u}{1/2^(2*\x)}
													\pgfmathsetmacro{\v}{1/2^(2*\x)+\stepone}
													\pgfmathsetmacro{\w}{1/2^(2*\x-1)}
													\foreach \z in {\u,\v,...,\w} {
																								\draw node at (\z,0,0.5) {} ;
																								}
										          }		
							  }{}		   											          		
\ifthenelse{#1 > 3}{
	\foreach \m in {\C,\E,...,\H} {            								
											  \pgfmathsetmacro{\mpstepone}{\m+\stepone}
											  \pgfmathsetmacro{\mpstep}{\m+\step}
											  \pgfmathsetmacro{\mmstep}{\m-\step}
											  \coordinate (ul) at (0,0,\mpstep);	
											  \coordinate (ur) at (\stepone,0,\mpstep);	
											  \coordinate (dl) at (0,0,\mmstep);		
											  \coordinate (dr) at (\stepone,0,\mmstep);
											  \draw [fill=gray, fill opacity=1] (\step,\alt,\m)--(ul)--(ur)--cycle;						 
						    				  }	
						       }{\draw [fill=gray, fill opacity=1] (\step,\alt,\C)--(0,0,\B)--(\stepone,0,\B)--cycle;}
\draw node at (0,0,\B) {};						       
\ifthenelse{#1 > 3}{							    				  
\foreach \m in {\C,\E,...,\H} {            								
											  \pgfmathsetmacro{\mpstepone}{\m+\stepone}
											  \pgfmathsetmacro{\mpstep}{\m+\step}
											  \pgfmathsetmacro{\mmstep}{\m-\step}
											  \coordinate (ul) at (0,0,\mpstep);	
											  \coordinate (ur) at (\stepone,0,\mpstep);	
											  \coordinate (dl) at (0,0,\mmstep);		
											  \coordinate (dr) at (\stepone,0,\mmstep);
											  \draw [fill=gray, fill opacity=1] (\step,\alt,\m)--(ur)--(\step,\alt,\mpstepone)--cycle;
								            }	
								 }{\draw [fill=gray, fill opacity=1] (\step,\alt,\C)--(\stepone,0,\B)--(\step,\alt,\E)--cycle;}						            		
\foreach \m in {\C,\E,...,\D} {            								
											  \pgfmathsetmacro{\mpstep}{\m+\step}
											  \pgfmathsetmacro{\mmstep}{\m-\step}
											  \coordinate (ul) at (0,0,\mpstep);	
											  \coordinate (ur) at (\stepone,0,\mpstep);	
											  \coordinate (dl) at (0,0,\mmstep);		
											  \coordinate (dr) at (\stepone,0,\mmstep);							 
											  \draw [fill=gray, fill opacity=0.5] (\step,\alt,\m)--(ur)--(dr)--cycle;
										    }								 
\draw [fill=gray, fill opacity=0.5] (\step,\alt,\C)--(0,0,0.5)--(\stepone,0,0.5)--cycle;			
\foreach \m in {0.5,\B,...,1} {\draw node at (\stepone,0,\m) {} ;}
\foreach \m in {\C,\E,...,\D} {\draw node at (\step,\alt,\m) {} ;}
\draw node at (0,0,0.5) {};
\end{tikzpicture}
}
\theoremstyle{plain} 
\newtheorem{teo}{Theorem}
\newtheorem{lem}{Lemma}
\newtheorem{prop}{Proposition}
\theoremstyle{definition}
\theoremstyle{remark}
\newtheorem{obs}{Remark}
\newcommand{\conjunto}[1]{\left\lbrace #1 \right\rbrace}
\newcommand{\cms}{$(X,\textrm{d})$ }
\newcommand{\todon}{n\in\mathbb{N}}
\renewcommand{\epsilon}{\varepsilon}
\newcommand{\dist}[2]{\textrm{d}(#1,#2)}
\newcommand{\diam}{\textrm{diam}}
\newcommand{\subjclass}[2][2010]{%
  \let\@oldtitle\@title%
  \gdef\@title{\@oldtitle\footnotetext{#1 \emph{Mathematics Subject Classification.} #2}}%
}
\newcommand{\keywords}[1]{%
  \let\@@oldtitle\@title%
  \gdef\@title{\@@oldtitle\footnotetext{\emph{Key words and phrases.} #1.}}%
}
\title{\bf{On a finite type Multivalued Shape}}
\author{Diego Mondéjar}
\date{}                     
\subjclass{54B20, 54C56, 54C60, 54D10, 55P55}
\keywords{Shape Theory, hyperspaces, multivalued maps, finite topological spaces}
\begin{document}
\maketitle
\begin{center}
\em Dedicated with respect to Professor J.M.R. Sanjurjo for his 70th birthday
\end{center}
\begin{abstract}
We briefly review the origins and development of Borsuk's Theory of Shapes and the Multivalued Shape of Sanjurjo. We use a construction over metric compacta using hyperspaces to define a finite type version.
\end{abstract} 
\section{Introduction}\label{sec:intro}

Shape Theory is a suitable extension of homotopy theory for topological spaces with bad local properties, where this theory does not give relevant information. The paradigmatic example is the Warsaw circle $\mathcal{W}$ of Figure \ref{warsawcircle}. It can be defined as the graph of function $f(x)=\sin\left(\frac{1}{x}\right)$ in the interval $(0,\frac{2}{\pi}]$ adding its closure (that is, the segment joining $(0,-1)$ and $(0,1)$) and closing the space by any simple (not intersecting itself or the rest of the space) arc joining the points $(0,-1)$ and $(\frac{\pi}{2},1)$.
\begin{figure}
	\begin{center}
		\begin{tikzpicture}[scale=2,domain=0:1,x=4cm]
			\FPeval{\w}{2/(3.1415)}
			\draw[domain=0.004:\w,samples=5000] plot (\x, {sin((1/\x)r)});
			\draw (0,1)--(0,-1.5)--(\w,-1.5)--(\w,{sin((1/\w)r)});
		\end{tikzpicture}
	\end{center}
	\caption{The Warsaw circle.}
	\label{warsawcircle}
\end{figure}
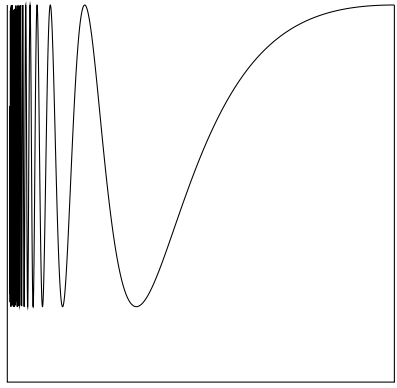 
It is readily seen that the fundamental group of $\mathcal{W}$ is trivial, since there are not continuous maps from $S^1$ to  $\mathcal{W}$ appart from the class of nullhomotopic maps. Moreover, so are all its homology and homotopy groups. But it is also easy to see that $\mathcal{W}$ is not contractible and, for example, it decomposes the plane in two connected components. It is then evident that homotopy theory does not work well for $\mathcal{W}$. One would like to detect this structure as something similar, in some sense, to $S^1$, since both separate the plane in two parts. The problem is that there is a deficiency of morphisms between $S^1$ and $\mathcal{W}$ in homotopy theory because of the bad local behaviour of $\mathcal{W}$ in some points. Shape was initiated in the celebrated paper \cite{Bconcerning} by Karol Borsuk in 1968 to overcome these limitations, defining a new category, containing the same information about well behaved topological spaces, but giving some relevant classification for spaces with bad local properties. The idea is that, no matter how bad the space is, its neighborhoods when it is embedded into a larger space (for example the Hilbert cube $Q$) are not too bad. In our example, it is easy to see in Figure \ref{comparation} that the neighborhoods of $\mathcal{W}$ are annuli, with the same the homotopy type of than the ones in $\mathbb{S}^1$. 
\begin{figure}
\begin{center}
	\captionsetup{width=.8\linewidth}
	\includegraphics[width=13cm]{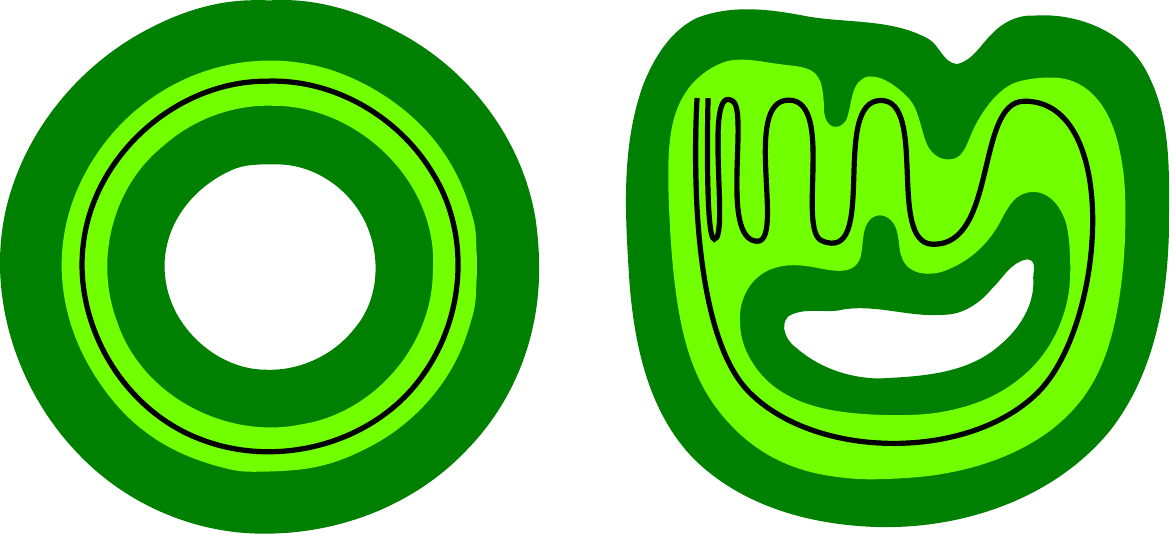}
	\caption{A circle and the warsaw circle compared by its neighborhoods. The first neighborhood is the dark and light colored annuli together and the second is just the light one. It is important to consider the homotopy type of the annuli and the way each annulus is included into the next one.}
\label{comparation}
\end{center}
\end{figure}
Then, we can compare spaces by comparing their neighborhoods and this process can be done for smaller and smaller neighborhoods as a limit. The space $\mathcal{W}$ share some global properties with $\mathbb{S}^1$ since their neighborhoods do. In this way, there are no non-trivial maps from $\mathbb{S}^1$ to $\mathcal{W}$, so we can compare then and detect if they share any topological property.

Specifically, Borsuk defined a new class of morphisms between metric compacta embedded in the Hilbert cube, called fundamental sequences, as sequences of continuous maps $f_n:Q\rightarrow Q$ satisfying some homotopy conditions on the neighborhoods of the spaces embedded in the Hilbert cube.  He introduced a notion of homotopy among fundamental sequences, setting the shape category of metric compacta as the homotopy classes for this homotopy relation. It is shown that the new category differs only formally from the homotopy category when the space under consideration is an ANR so it is just a way of enlarge the homotopy category with more morphism. For the details, see the original source \cite{Bconcerning}, or the books by Borsuk \cite{Btheory,BtheoryA}.

After Borsuk's description of the shape category for metric compacta, there was a lot of work in Shape, such as different descriptions, extensions to more general spaces (for instance, the important Fox's extension for metric spaces \cite{Fon}), classifications of shape types or shape invariants. As general references, we recommend the books \cite{Btheory,BtheoryA,MSshape,DSshape} and the surveys \cite{Mthirty,Mabsolute}.

As can be easily deduced, Shape is a very simple and intuitive idea, but is very hard to formalize. One of the most useful descriptions of Shape was the inverse system approach, initiated by Mardesic and Segal for compact Hausdorff spaces in \cite{MSshapes}, and further developed by them and some other authors. In this one, Shape is defined using inverse sequences of polyhedra and its inverse limits for topological spaces and, more generally, inverse systems for categories. The best reference for this approach, is the book by the authors \cite{MSshape}.

In this paper, we are concerned with the Multivalued Shape Theory for metric compacta, initiated by Sanjurjo in \cite{San}. The key and acute idea of Multivalued Shape is to replace the shape morphisms by sequences of multivalued maps with decreasing diameters of their images, which is, in some sense, a very natural way of defining them, but also very hard to formalize. By defining a non-trivial sort of homotopic classes in this maps, it is possible to establish a category isomorphic to the shape category of metric compacta. The importance of this theory lies on the fact that it is internal. That is, we do not make use of external elements (such as the Hilbert cube or polyhedra) to describe the morphisms, as in other shape theories. We just use maps between the metric compacta to define the morphisms. We will use a reformulation of the Multivalued Shape here but it is recommended to read the original source \cite{San}, because of its enlighted geometry. The author of the present paper thanks J.M.R. Sanjurjo for his clever ideas and exposition, which had a great impact in the origins of his research. 

\paragraph{\textsc{Multivalued maps and hyperspaces}} This multivalued theory of shape was reinterpreted later by Alonso-Morón and González Gómez in \cite{MGhomotopical}, as it follows. It is based on the observation that multivalued functions are just maps into hyperspaces with the so-called upper semifinite topology. For a compact metric space $X$, we call its \emph{hyperspace} to the set of non-empty closed subsets $$2^X=\left\lbrace C\subset X: C\enspace\text{closed}\right\rbrace$$ and we endow this space with the \emph{upper semifinite topology} $2^X_u$ with base the family $B=\{B(U)\}_{U}$ for all the open subset $U\subset X$, where $B(U)$ is defined as the set of elements of $2^X$ contained in $U$. A deep study of this topology can be found at \cite{MGupper,MGhomotopical}. Even though this space $2^X_u$ has very poor topological properties (is not even Hausdorff, in general), it has a very easy handling. For example, if the space $X$ is discrete, the hyperspace $2^X_u$ is a \textsc{poset} with the order given by $C<D$ iff $C\subset D$. Actually, the upper semifinite topology in the hyperspace is equivalent to the upper semicontinuity in the multivalued maps. We need to define two concepts here. Let $X$ and $Y$ be two compact metric spaces. Consider $2^Y_u$ the hyperspace of $Y$ with the upper semifinite topology. An \emph{approximative map} from $X$ to $Y$ is a sequence of continuous maps $\{f_n\}$, with  $f_n:X\rightarrow 2^Y_U$, such that, for every neighborhood $U$ of the canonical copy of $Y$ in $2^Y_u$, there exists $n_0\in\mathbb{N}$ such that $f_n$ is homotopic to $f_{n+1}$ in $U$ (written $f_n\simeq_U g_n$, meaning there exists a homotopy $H:X\times I\rightarrow U\subset2^Y_u$ between $f_n$ and $f_{n+1}$) for every $n>n_0$. We say that two approximative maps $\{f_n\}$ and $\{g_n\}$ from $X$ to $Y$ are \emph{homotopic} when, for each open neighborhood $U$ of the canonial copy of $Y$ in $2^Y_u$, there exists $n_0\in\mathbb{N}$ such that $f_n$ is homotopic to $g_n$ in $U$ for every $n>n_0$. This way, it is established that, for compact metric spaces, the homotopy classes of approximative maps between two spaces is equivalent to the shape morphisms between them. 

\section{The Main Construction}
We recall the \emph{Main Construction}, introduced in \cite{MCLepsilon} and reformulated in \cite{MMreconstruction}. There, a sequence of finite approximations for a compact metric space $X$, with some special properties, is constructed. Let $\cms$ be a compact metric space. For every $\varepsilon>0$, we can find a finite \emph{$\varepsilon$-approximation} $A\subset X$. That is, for every point $x\in X$, there is a point $a\in A$ with $\text{d}(x,a)<\varepsilon$. An \emph{adjusted approximative sequence} $\left\lbrace\varepsilon_n,A_n\right\rbrace$ consists of a decreasing and tending to zero sequence of positive real numbers $\{\varepsilon_n\}$ and a sequence of finite spaces $\{A_n\}$ such that, for every $\todon$, $A_n$ is an $\varepsilon_n$-approximation of $X$ and $\varepsilon_{n+1}$ is adjusted to it, meaning that, for every $\todon$, $\varepsilon_{n+1}<\frac{\varepsilon_{n}-\gamma_{n}}{2}$, with \[\gamma_n=\text{sup}\{\text{d}(x,A_n):x\in X\}<\varepsilon_n.\] The Main Construction in \cite{MCLepsilon} and Theorem 3 in \cite{MMreconstruction} states the existence of adjusted approximative sequences for every compact metric space. Given an adjusted approximative sequence, we can define a \emph{nearby approximative sequence} $\{q_{A_n}\}$ as the sequence of continuous maps $q_{A_n}:X\rightarrow2^{A_n}_u$, defined by proximity, \[q_{A_n}(x)=\left\lbrace a\in A:\text{d}(x,A_n)=\text{d}(x,a)\right\rbrace,\] and a \emph{finite approximative sequence} (\textsc{fas} for short) $\left\lbrace U_{2\varepsilon_n}(A_{n}),p_{n,n+1}\right\rbrace$, as an inverse sequence with terms \[U_{2\varepsilon_n}(A_{n})=\conjunto{C\in 2^{A_n}_u:\diam(C)<2\varepsilon_n}\subset 2^{A_n}_u\] and continuous maps \[p_{n,n+1}:U_{2\varepsilon_{n+1}}(A_{n+1})\longrightarrow U_{2\varepsilon_{n}}(A_{n})\] with $p_{n,n+1}=q_{A_n}(C)$, for every closed set $C\in U_{2\varepsilon_{n+1}}(A_{n+1})$. Note that the upper semifinite topology in the hyperspace of the discrete space $U_{2\varepsilon_{n}}(A_{n})$ is a finite space with the relation $\subset$ as poset. We shall use some facts about the distance preserved by these maps.
\begin{lem}\label{lem:distmc}
Let $X$ be a compact metric space and $\left\lbrace\varepsilon_n,A_n\right\rbrace$ an adjusted approximative sequence for it. Consider points $x\in X$, $a_n\in A_n$ and $a_m\in A_m$ with $n<m$.
\begin{itemize}[noitemsep]
	\item[i)] If $a_n\in q_{A_n}(x)$ and $a_m\in q_{A_m}(x)$, then $\dist{a_n}{a_m}<\varepsilon_n$.
	\item[ii)] If $a_n\in p_{n,m}(\{a_m\})$, then $\dist{a_n}{a_m}<\varepsilon_n$.
	\item[iii)] If $a_n\in p_{n,m}\cdot q_{A_m}(x)$, then $\dist{a_n}{x}<\varepsilon_n$.
\end{itemize}
\end{lem}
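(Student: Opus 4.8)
The plan is to reduce all three items to a single metric estimate forced by the adjustment condition $\varepsilon_{n+1}<\frac{\varepsilon_n-\gamma_n}{2}$, and then to read off each statement by tracking the nearest-point assignments that define $q_{A_n}$ and the bonding maps. First I would record the two elementary facts that drive everything: if $a_n\in q_{A_n}(x)$ then $\dist{x}{a_n}=\dist{x}{A_n}\leq\gamma_n<\varepsilon_n$, directly from the definitions of $q_{A_n}$ and of $\gamma_n$; and if $b\in q_{A_k}(c)$ for some point $c\in X$, then likewise $\dist{b}{c}=\dist{c}{A_k}\leq\gamma_k$. Next I would isolate the arithmetic heart: from $\varepsilon_{k+1}<\frac{\varepsilon_k-\gamma_k}{2}$ one gets $\varepsilon_{k+1}<\varepsilon_k-\gamma_k$, hence the one-step bound $\gamma_k+\varepsilon_{k+1}<\varepsilon_k-\varepsilon_{k+1}<\varepsilon_k$. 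This is the inequality that keeps the triangle-inequality errors from accumulating.

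For i), since $a_n\in q_{A_n}(x)$ and $a_m\in q_{A_m}(x)$ with $n<m$, the triangle inequality gives $\dist{a_n}{a_m}\leq\dist{a_n}{x}+\dist{x}{a_m}\leq\gamma_n+\gamma_m$. As $m\geq n+1$ and $\{\varepsilon_k\}$ is decreasing, $\gamma_m<\varepsilon_m\leq\varepsilon_{n+1}$, so $\gamma_n+\gamma_m<\gamma_n+\varepsilon_{n+1}<\varepsilon_n$ by the one-step bound with $k=n$. This already settles i) without any reference to the composed maps.

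For ii) and iii) the first thing to pin down is the meaning of $p_{n,m}$: it is the composition $p_{n,n+1}\circ\cdots\circ p_{m-1,m}$, and since each $p_{k,k+1}$ sends a closed set to the union of the nearest-point sets $q_{A_k}(\cdot)$ of its elements, the membership $a_n\in p_{n,m}(\{a_m\})$ unwinds into a chain $a_n=b_n,b_{n+1},\dots,b_m=a_m$ with $b_k\in q_{A_k}(b_{k+1})$ for $n\leq k\leq m-1$. With this in hand I would prove ii) by induction on $m-n$: choosing $b_{n+1}\in p_{n+1,m}(\{a_m\})$ with $a_n\in q_{A_n}(b_{n+1})$, the inductive hypothesis yields $\dist{b_{n+1}}{a_m}<\varepsilon_{n+1}$, whence $\dist{a_n}{a_m}\leq\dist{a_n}{b_{n+1}}+\dist{b_{n+1}}{a_m}\leq\gamma_n+\varepsilon_{n+1}<\varepsilon_n$; the base case $m=n+1$ is exactly the second elementary fact. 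Item iii) is the same induction with the point $x$ playing the role of the top vertex of the chain: if $a_n\in p_{n,m}(q_{A_m}(x))$, pick $b_{n+1}\in p_{n+1,m}(q_{A_m}(x))$ with $a_n\in q_{A_n}(b_{n+1})$, apply iii) at level $n+1$ to obtain $\dist{b_{n+1}}{x}<\varepsilon_{n+1}$, and conclude $\dist{a_n}{x}\leq\gamma_n+\varepsilon_{n+1}<\varepsilon_n$, the trivial-chain base case being $a_m\in q_{A_m}(x)$ with $\dist{a_m}{x}\leq\gamma_m<\varepsilon_m$.

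The main obstacle I anticipate is not any single estimate but the bookkeeping of the composed bonding maps: confirming that $p_{n,m}(\{a_m\})$ and $p_{n,m}(q_{A_m}(x))$ genuinely decompose into nearest-point chains, and recognizing that a crude bound such as $\sum_{k=n}^{m-1}\gamma_k$ or $\gamma_n+\gamma_m+\cdots$ could a priori exceed $\varepsilon_n$. The adjustment condition, through the telescoping estimate $\sum_{k=n}^{N}\gamma_k<\varepsilon_n$ (equivalently the one-step inequality $\gamma_k+\varepsilon_{k+1}<\varepsilon_k$), is precisely what collapses the induction so that every distance stays below $\varepsilon_n$; this is the step I would be most careful to verify.
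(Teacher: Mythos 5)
Your argument is correct, and for part i) it coincides with the paper's (you are in fact slightly more careful: the paper bounds $\dist{x}{a_m}$ by $\gamma_{n+1}$ where it should say $\gamma_m<\varepsilon_m\leq\varepsilon_{n+1}$, which is exactly the repair you make). Where you genuinely diverge is in parts ii) and iii). The paper does not prove ii) at all --- it cites Proposition 2 of the reference on the Main Construction --- and then obtains iii) by a single triangle inequality $\dist{a_n}{x}\leq\dist{a_n}{a_m}+\dist{a_m}{x}$ combined with that imported estimate. You instead make the lemma self-contained: you unwind $p_{n,m}=p_{n,n+1}\circ\cdots\circ p_{m-1,m}$ into a chain $b_n,\dots,b_m$ with $b_k\in q_{A_k}(b_{k+1})$ and run an induction on $m-n$, with the one-step inequality $\gamma_k+\varepsilon_{k+1}<\varepsilon_k$ (a direct consequence of the adjustment condition $\varepsilon_{k+1}<\frac{\varepsilon_k-\gamma_k}{2}$) absorbing the error at each stage so that nothing accumulates. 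This is essentially a reconstruction of the cited Proposition 2, and it correctly identifies the only non-trivial issue, namely that a naive sum $\sum\gamma_k$ need not be controlled without the adjustment hypothesis. What your route buys is independence from the external reference and a uniform mechanism for ii) and iii); what the paper's route buys is brevity, at the cost of a statement whose proof lives elsewhere and of some index sloppiness (its computation for iii) ends with ``$<\varepsilon$'' where $\varepsilon_n$ is meant). Both are valid proofs of the same estimates.
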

\begin{proof}
For $i)$ we have that \[\text{d}(a_n,a_m)\leqslant\text{d}(a_n,x)+\text{d}(x,a_m)<\gamma_n+\gamma_{n+1}<\gamma_n+\dfrac{\varepsilon_n-\gamma_n}{2}<\varepsilon_n.\] Part $ii)$ is Proposition 2 in \cite{MMreconstruction}. The last claim follows because \[\text{d}(a_n,x)\leqslant\text{d}(a_n,a_m)+\text{d}(a_m,x)<\gamma_n+2\gamma_{n+1}<\gamma_n+2\varepsilon_{n+1}<\varepsilon.\]
\end{proof}
The Main Construction has good properties in terms of reconstruction of the original space. In \cite{MMreconstruction}, it is shown that the inverse limit of every \textsc{fas} has the homotopy type of the original space, since it contains an homeomorphic copy of it which is an strong deformation retract of this limit. In \cite{Mpolyhedral}, several sequences of polyhedra associated to a \textsc{fas} and called \emph{Polyhedral Approximative Sequences (\textsc{pas})} and it is shown that they recover the shape type of the space, conjectured in \cite{MCLepsilon} as the \emph{General Principle}.

\section{Finite type Multivalued Shape} \label{sec:multishape}
In this section we analyze the shape properties of the Main Construction in the realm of the Multivalued Shape Theory for compact metric spaces. The purpose of this relationship is to reflect that the Main Construction captures the shape properties of the space in which is done and that it is able to do it using only finite subsets of the space. We prove some propositions concerning this relationship, in particular two results proposed in \cite{MCLepsilon} and a finiteness theorem for shape morphisms.

We need two technical lemmas about homotopies in hyperspaces with the upper semifinite topology. The advantage of this topology is shown here, since these homotopies are constructed in a very natural way.

\begin{lem}\label{lem:unioncontinua}
Let $X, Y$ be metric compacta and $f,g:X\rightarrow2^Y_u$ two continuous maps. The map $f\cup g:X\rightarrow2^Y_u$ defined by $\left(f\cup g\right)(x)=f(x)\cup g(x)$ is continuous.
\end{lem}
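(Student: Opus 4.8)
The plan is to verify continuity directly against the basic open sets of the upper semifinite topology, exploiting the fact that a finite union of sets is contained in an open set precisely when each of the sets is. First I would check that $f\cup g$ is well defined as a map into $2^Y_u$: for each $x\in X$ both $f(x)$ and $g(x)$ are non-empty closed subsets of $Y$, so their union $f(x)\cup g(x)$ is a finite union of closed sets, hence again a non-empty closed subset of $Y$ and a legitimate element of $2^Y$.

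For continuity, recall that $2^Y_u$ has as a base the family $\{B(U)\}$ indexed by the open sets $U\subset Y$, where $B(U)=\{C\in 2^Y : C\subset U\}$. It therefore suffices to show that $\left(f\cup g\right)^{-1}(B(U))$ is open in $X$ for every open $U\subset Y$. The key step is the elementary set-theoretic equivalence
\[
f(x)\cup g(x)\subset U \iff f(x)\subset U \text{ and } g(x)\subset U,
\]
which holds for arbitrary subsets. Translating this through the definitions gives
\[
\left(f\cup g\right)^{-1}(B(U)) = f^{-1}(B(U))\cap g^{-1}(B(U)).
\]
Since $f$ and $g$ are continuous, both $f^{-1}(B(U))$ and $g^{-1}(B(U))$ are open in $X$, and hence so is their intersection. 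As $B(U)$ ranges over a base of $2^Y_u$, this establishes the continuity of $f\cup g$.

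There is essentially no serious obstacle here; this is exactly the convenience of the upper semifinite topology the text alludes to. The one point deserving attention is the \emph{direction} of the containment in $B(U)$: because the base is given by ``being contained in $U$'' rather than ``meeting $U$'', the preimage of a basic open set turns into an \emph{intersection} of preimages, which is what makes the argument close so cleanly. With the lower or Vietoris topologies this particular equivalence would not be available, and the union map would not be continuous in general.
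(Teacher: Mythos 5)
Your proof is correct and follows essentially the same route as the paper's: the paper argues pointwise, intersecting the two neighborhoods $U_1$ and $U_2$ furnished by the continuity of $f$ and $g$, which is exactly your identity $\left(f\cup g\right)^{-1}(B(U)) = f^{-1}(B(U))\cap g^{-1}(B(U))$ expressed locally. The well-definedness check and the key observation that $f(x)\cup g(x)\subset U$ forces both $f(x)\subset U$ and $g(x)\subset U$ match the paper's argument exactly.
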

\begin{proof}
For every $x\in X$, the map $f\cup g$ is well defined, because $f(x)$ and $g(x)$ are closed subsets of $Y$, and so is $f(x)\cup g(x)$. Let us take a neighborhood $B(V)$ of $f(x)\cup g(x)$ in $2^Y_u$, where $B(V)=\{C\in2^Y_u : C\subset V\}$ with $V$ an open subset of $Y$ which contains $f(x)\cup g(x)$. The maps $f$ and $g$ are continuous, so there are two neighborhoods of $x$, namely $U_1$ and $U_2$, such that $f(U_1),g(U_2)\subset V$. Then $U=U_1\cap U_2$ is a neighborhood of $x$ such that $(f\cup g)(U)=f(U)\cup g(U)\subset f(U_1)\cup g(U_2)\subset V$, so $f\cup g(U)\subset B(V)$, hence $f\cup g$ is continuous at $x$.
\end{proof} 

\begin{lem}\label{lem:hiperhomotopia}
Let $X, Y$ be metric compacta and let $f,g,h:X\rightarrow2^Y_u$ be continuous maps such that, for every $x\in X$, $f(x), g(x)\subset h(x)$. Then, $f$ and $g$ are homotopic.
\end{lem}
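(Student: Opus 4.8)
The plan is to exhibit a single explicit homotopy $H\colon X\times I\to 2^Y_u$ that passes through $h$ at exactly one instant, so as to exploit the characteristic downward-closed shape of the basic open sets of the upper semifinite topology. Concretely, I would set
\[
H(x,t)=\begin{cases} f(x) & 0\le t<\tfrac12,\\ h(x) & t=\tfrac12,\\ g(x) & \tfrac12<t\le 1, \end{cases}
\]
which satisfies $H(x,0)=f(x)$ and $H(x,1)=g(x)$. Each value is a nonempty closed subset of $Y$, so $H$ indeed takes values in $2^Y_u$ and is well defined; the whole content of the lemma is then the continuity of $H$.

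On the two open strips $X\times[0,\tfrac12)$ and $X\times(\tfrac12,1]$ there is nothing delicate: on the first $H$ agrees with $f$ composed with the projection to $X$, and on the second with $g$ composed with that projection, both continuous by hypothesis, and around any point of these strips one can choose a product neighborhood whose time coordinate stays inside the relevant strip. Hence continuity only needs to be checked along the slice $t=\tfrac12$.

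The crux, and the only place where the hypothesis $f(x),g(x)\subset h(x)$ is used, is continuity at a point $(x_0,\tfrac12)$. A basic neighborhood of $H(x_0,\tfrac12)=h(x_0)$ has the form $B(U)=\conjunto{C\in 2^Y_u:C\subset U}$ with $U\subset Y$ open and $h(x_0)\subset U$. Here I would invoke the defining feature of the upper semifinite topology: since $f(x_0),g(x_0)\subset h(x_0)\subset U$, the same set $B(U)$ is simultaneously a neighborhood of $f(x_0)$, of $g(x_0)$ and of $h(x_0)$. Continuity of $f$, $g$ and $h$ at $x_0$ then yields a single neighborhood $W$ of $x_0$ with $f(W),g(W),h(W)\subset B(U)$, so that $H(x,t)\subset U$ for every $x\in W$ and every $t\in I$; therefore $H(W\times I)\subset B(U)$ and $H$ is continuous at $(x_0,\tfrac12)$. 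This establishes $f\simeq g$.

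The main obstacle to anticipate is the asymmetry of the upper semifinite topology, which dictates that the larger set $h$ must sit at an \emph{isolated} instant rather than on an interval: a naive homotopy that held the value $h$ on a subinterval adjacent to an endpoint carrying a smaller value ($f$ or $g$) would fail continuity there, since a neighborhood $B(U)$ of the smaller endpoint value need not contain the larger set $h(x_0)$. Placing $h$ at the single time $t=\tfrac12$ is precisely what keeps every nearby value inside $B(U)$. I note finally that one could equally put $f\cup g$ at the midpoint, which is continuous by Lemma \ref{lem:unioncontinua} and still lies inside $h$ pointwise, but using $h$ directly is cleaner and requires only the three maps already given.
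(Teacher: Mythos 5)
Your proposal is correct and follows essentially the same route as the paper: the identical three-piece homotopy with $h$ at the single instant $t=\tfrac12$, with continuity checked on the open strips trivially and at the middle slice via a basic neighborhood $B(U)$ of $h(x_0)$. The only cosmetic difference is that you invoke the continuity of $f$ and $g$ at $x_0$ directly, whereas the paper deduces $f(U'),g(U')\subset B(V)$ from the pointwise inclusions $f(x),g(x)\subset h(x)$ together with the continuity of $h$ alone; both are valid.
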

\begin{proof}
The map $H:X\times I\longrightarrow2^X_u$, defined by 
\begin{equation*}
H(x,t) = \left\{
\begin{array}{rl}
f(x) & \text{if } t\in[0,\frac{1}{2}),\\
h(x) & \text{if } t=\frac{1}{2},\\
g(x) & \text{if } t\in(\frac{1}{2},1],
\end{array}\right.
\end{equation*} 
is continuous and hence a homotopy between the two maps. Indeed, $H$ is obviously continuous in every point $(x,t)$ with $t\neq\frac{1}{2}$. For every point $(x,\frac{1}{2})\in X\times\frac{1}{2}$, consider an open neighborhood $B(V)$ of $h(x)$. Because of the continuity of $h$, there is an open neighborhood $U$ of $x$ such that $h(U)\subset B(V)$. But $f(U), g(U)\subset h(U)$, so $U\times I$ is an open neighborhood of $(x,\frac{1}{2})$ such that $H(U\times I)\subset B(V)$, and the continuity of $H$ is shown.
\end{proof}
\begin{obs}
From the previous two lemmas, we can derive that every two maps $f,g:X\rightarrow 2^Y_u$ are homotopic. Despite it seems to be no interest in these trivial homotopies, the point here is to construct them in some subspaces of $2^Y_u$.
\end{obs}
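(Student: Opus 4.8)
The plan is to derive the statement as an immediate corollary of the two preceding lemmas, with no new construction required. Given two continuous maps $f,g:X\rightarrow 2^Y_u$, the only natural candidate for a "common upper bound" in the containment sense is their pointwise union, so I would set $h=f\cup g$, that is, $h(x)=f(x)\cup g(x)$ for every $x\in X$.

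First I would invoke Lemma \ref{lem:unioncontinua} to guarantee that $h=f\cup g$ is a genuine continuous map $X\rightarrow 2^Y_u$; this is exactly the role that lemma plays, ensuring the union operation does not leave the category of continuous maps into the hyperspace. Second, I would observe that the required containment hypothesis of Lemma \ref{lem:hiperhomotopia} holds automatically and trivially: for every $x\in X$ one has $f(x)\subset f(x)\cup g(x)=h(x)$ and likewise $g(x)\subset h(x)$. With $f$, $g$ and $h$ all continuous and satisfying $f(x),g(x)\subset h(x)$, Lemma \ref{lem:hiperhomotopia} applies verbatim and produces a homotopy between $f$ and $g$ (the explicit homotopy passing through the value $h(x)$ at the parameter $t=\tfrac{1}{2}$).

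I expect there to be no genuine obstacle here: the two lemmas were evidently designed so that this combination is forced, and the proof is essentially a single line chaining them together. If anything, the only thing to be careful about is purely formal, namely that $h(x)$ is again a nonempty closed subset of $Y$ (a finite union of closed sets is closed), but this is already subsumed in the well-definedness part of Lemma \ref{lem:unioncontinua}. It is worth emphasizing, in line with the remark's own caveat, that the strength of the upper semifinite topology is precisely what makes these homotopies cost-free when the target is the whole of $2^Y_u$; the interest only appears later, when one must reproduce such homotopies while remaining inside a prescribed subspace $U\subset 2^Y_u$, where the union $f\cup g$ need no longer land in $U$ and the argument above breaks down.
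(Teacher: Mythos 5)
Your proposal is correct and is exactly the derivation the paper intends: form $h=f\cup g$, apply Lemma \ref{lem:unioncontinua} for continuity and Lemma \ref{lem:hiperhomotopia} for the homotopy through $h$ at $t=\tfrac{1}{2}$, which is the same pattern used in all the subsequent proofs. Your closing observation about why the argument can fail inside a prescribed subspace $U\subset 2^Y_u$ matches the remark's own caveat.
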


We can show the following result, posed in \cite{MCLepsilon} (as Proposition 21), relating the Main Construction with the Multivalued Shape Theory.
\begin{prop}
Let $X$ be a compact metric space. Consider an adjusted approximative sequence $\{\varepsilon_n, A_n\}$ obtained by performing the Main Construction to $X$. The sequence of maps $\{q_{A_n}\}$ is an approximative map representing the identity shape morphism on $X$.
\end{prop}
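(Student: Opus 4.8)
The plan is to exhibit $\{q_{A_n}\}$, regarded as a sequence of maps into $2^X_u$, as an approximative map homotopic to the \emph{singleton map} $c\colon X\to2^X_u$, $c(x)=\{x\}$, which is the canonical representative of the identity shape morphism on $X$. First I would note that each $q_{A_n}$, which a priori takes values in $2^{A_n}_u$, can be viewed as a map into $2^X_u$: since $A_n\subset X$ is finite its subsets are closed in $X$, and the inclusion $2^{A_n}_u\hookrightarrow2^X_u$ is continuous (the preimage of a basic open $B(V)$ is $B(V\cap A_n)$), so continuity of $q_{A_n}$ is inherited from the setup. The core claim I would establish is that \emph{for every open neighborhood $U$ of the canonical copy of $X$ in $2^X_u$ there is $n_0$ such that $q_{A_n}\simeq_U c$ for all $n>n_0$}. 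This single statement does all the work: applying it to $n$ and $n+1$ and composing homotopies gives $q_{A_n}\simeq_U q_{A_{n+1}}$, so $\{q_{A_n}\}$ is an approximative map; and it says precisely that $\{q_{A_n}\}$ is homotopic to $\{c\}$, hence represents the identity.

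The key step, and the one I expect to be the main obstacle, is a lemma describing neighborhoods of the canonical copy: \emph{for every open $U$ containing all singletons there is $\delta>0$ such that every nonempty closed $C\subset X$ with $\diam(C)<\delta$ satisfies $C\in U$.} I would prove this by contradiction using compactness of $X$. If it failed, for each $k$ there would be a closed $C_k$ with $\diam(C_k)<1/k$ but $C_k\notin U$; choosing $x_k\in C_k$ and passing to a subsequence with $x_k\to x_*$, I would check that $C_k\to\{x_*\}$ in the upper semifinite topology: given a basic neighborhood $B(V)$ of $\{x_*\}$ with $V\ni x_*$ open, every $y\in C_k$ satisfies $\dist{y}{x_*}\leq\dist{y}{x_k}+\dist{x_k}{x_*}<1/k+\dist{x_k}{x_*}$, which is eventually smaller than any prescribed radius, so $C_k\subset V$ for large $k$. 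Since $\{x_*\}\in U$ and $U$ is open, eventually $C_k\in U$, a contradiction.

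With this lemma in hand the construction of the homotopy is routine. I would set $h_n=q_{A_n}\cup c$, which is continuous by Lemma \ref{lem:unioncontinua}, and note that $f=q_{A_n}$ and $g=c$ both satisfy $f(x),g(x)\subset h_n(x)$, so Lemma \ref{lem:hiperhomotopia} yields a homotopy $H$ between them all of whose values $H(x,t)$ are contained in $h_n(x)$. Since every point of $q_{A_n}(x)$ lies within $\gamma_n$ of $x$, I get $\diam(h_n(x))\leq2\gamma_n<2\varepsilon_n$, and hence $\diam(H(x,t))<2\varepsilon_n$ for all $(x,t)$. Given $U$, I take $\delta$ from the lemma and $n_0$ with $2\varepsilon_{n_0}\leq\delta$; then for $n>n_0$ every value of $H$ has diameter $<\delta$ and therefore lies in $U$, so $H$ is a homotopy \emph{in} $U$ and $q_{A_n}\simeq_U c$.

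A last remark on where care is needed: it is tempting to argue that once $h_n(x)\in U$ every subset of $h_n(x)$ is in $U$ too, but this subset-closedness only holds for the basic sets $B(V)$, not for an arbitrary open $U$. This is why I track the diameter of \emph{every} intermediate value $H(x,t)$ and invoke the neighborhood lemma for each of them, rather than only for $h_n(x)$. Finally, to close the argument I would recall from the correspondence in the Introduction that the constant sequence $\{c\}$ represents $\operatorname{id}_X$, so that ``$q_{A_n}\simeq_U c$ for large $n$ and all $U$'' is exactly the assertion that $\{q_{A_n}\}$ represents the identity shape morphism.
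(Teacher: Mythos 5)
Your proof is correct, and its engine is the same as the paper's: the middle-stage union map, continuity via Lemma \ref{lem:unioncontinua}, the three-step homotopy of Lemma \ref{lem:hiperhomotopia}, and a diameter estimate placing everything inside a small neighborhood of the canonical copy of $X$. You differ from the paper in two organizational respects. First, the paper simply cites \cite{MGhomotopical} for the fact that the sets $U_\varepsilon$ of closed sets of diameter less than $\varepsilon$ form a neighborhood base of the canonical copy, and then builds two separate homotopies, one from $q_{A_n}$ to $q_{A_{n+1}}$ (through $q_{A_n}\cup q_{A_{n+1}}$, using Lemma \ref{lem:distmc} style estimates) and one from $q_{A_n}$ to the singleton map (through $q_{A_n}(x)\cup\{x\}$). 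You instead prove the neighborhood-base fact from scratch by a compactness argument, which makes the proof self-contained, and you establish only the single homotopy $q_{A_n}\simeq_U c$, deducing $q_{A_n}\simeq_U q_{A_{n+1}}$ by reversal and concatenation of homotopies in $U$; this is a mild economy (one homotopy construction instead of two) at the cost of invoking transitivity of $\simeq_U$, which is harmless here. Your closing caveat about subset-closedness holding only for basic sets $B(V)$ and not for arbitrary open $U$ is a correct and worthwhile observation that the paper handles implicitly by keeping all intermediate values inside $U_\varepsilon$.
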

\begin{proof} 
Let us first prove that $\{q_{A_n}\}$ is indeed an approximative map. As we know, for each $n\in\mathbb{N}$ the map $q_{A_n}:X\rightarrow 2^Y_u$ is continuous. The family $\{U_\varepsilon\}$ is a base of open neighborhoods of the canonical copy of $X$ inside $2^X_u$ (see \cite{MGhomotopical}), so there exists an $\varepsilon>0$ such that $X\subset U_\varepsilon\subset U$. There exists an integer $n_0$ such that $2\varepsilon_{n_0}<\varepsilon$. We claim that, for every $n\geqslant n_0$, the map $H:X\times I\rightarrow U\subset2^X_u$, defined by
\begin{equation*}
H(x,t) = \left\{
\begin{array}{rl}
q_{A_n}(x) & \text{if } t\in[0,\frac{1}{2}),\\
q_{A_n}(x)\cup q_{A_{n+1}}(x) & \text{if } t=\frac{1}{2},\\
q_{A_{n+1}}(x) & \text{if } t\in(\frac{1}{2},1],
\end{array}\right.
\end{equation*}
is an homotopy between $q_{A_n}$ and $q_{A_{n+1}}$ in $U$. The map is continuous and well defined because, for every $x\in X$, 
\begin{eqnarray*}
q_{A_{n+1}}(x)&\subset&\textrm{B}(x,\varepsilon_{n+1})\subset\textrm{B}(x,\varepsilon_n),\\
q_{A_n}(x)&\subset&\textrm{B}(x,\varepsilon_n),
\end{eqnarray*}
and then
$$\textrm{diam}\left(q_{A_n}(x)\cup q_{A_{n+1}}(x)\right)<2\varepsilon_n<\varepsilon,$$
so the images of the applications are in $U_\varepsilon\subset U$ and it is an homotopy because of Lemma \ref{lem:unioncontinua} and Lemma \ref{lem:hiperhomotopia}.

It is clear that the approximative map $\textrm{id}:X\rightarrow 2^X_u$, with $\textrm{id}(x)=\{x\}$, corresponds to the identity shape morphism of $X$. To prove that $\{q_{A_n}\}_{n\in\mathbb{N}}$ it is homotopic to the identity, we just choose $n_0$ such that $2\varepsilon_{n_0}<\varepsilon$, and use the homotopy (Lemma \ref{lem:unioncontinua} and Lemma \ref{lem:hiperhomotopia}.) $H:X\times I\rightarrow U\subset2^X_u$, defined by
\begin{equation*}
H(x,t) = \left\{
\begin{array}{rl}
q_{A_n}(x) & \text{if } t\in[0,\frac{1}{2}),\\
q_{A_n}(x)\cup \{x\} & \text{if } t=\frac{1}{2},\\
\{x\} & \text{if } t\in(\frac{1}{2},1].
\end{array}\right.
\end{equation*}
\end{proof}

Note that, in the previous result, we obtain an equivalent approximative map with finite images. We can generalize this to any class of approximative map in order to describe a Shape Theory for metric compacta in simpler terms. An approximative map $\{f_n\}_{\todon}$ between compact metric spaces $X$ and $Y$ is said to be of \emph{finite type} if, for every $\todon$ and every $x\in X$, the image $f_n(x)$ is a finite set. For every homotopy class of approximative maps, we can always find a representative of finite type.
\begin{teo}
Let $X, Y$ be compact metric spaces and $\{f_n\}_{\todon}$ an approximative map from $X$ to $Y$. There exists an approximative map $\{\mathfrak{f}_n\}_{\todon}$ of finite type from $X$ to $Y$ homotopic to $\{f_n\}_{\todon}$.
\end{teo}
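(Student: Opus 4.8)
The plan is to replace each closed image $f_n(x)\subseteq Y$ by a nearby \emph{finite} set obtained by discretizing $Y$ with a fine net. Fix, for every $\todon$, a finite $\delta_n$-approximation $A_n\subseteq Y$ with $\delta_n\to 0$, and define
\[
\mathfrak{f}_n(x)=\conjunto{a\in A_n:\dist{a}{f_n(x)}\leqslant\delta_n},
\]
where $\dist{a}{f_n(x)}=\min_{y\in f_n(x)}\dist{a}{y}$. Each $\mathfrak{f}_n(x)$ is a finite subset of $A_n$, hence closed, and it is non-empty because $A_n$ is a $\delta_n$-net and $f_n(x)\neq\emptyset$. Moreover $\mathfrak{f}_n(x)$ and $f_n(x)$ are within Hausdorff distance $\delta_n$: every $a\in\mathfrak{f}_n(x)$ lies within $\delta_n$ of $f_n(x)$ by definition, and every point of $f_n(x)$ lies within $\delta_n$ of some $a\in A_n$, which then belongs to $\mathfrak{f}_n(x)$.

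The first thing to check, and the technical heart of the argument, is that each $\mathfrak{f}_n:X\rightarrow 2^Y_u$ is continuous. Since $A_n$ is finite, $\mathfrak{f}_n$ lands in the finite poset $2^{A_n}_u$, which embeds in $2^Y_u$, and continuity reduces to showing that $\conjunto{x\in X:a\in\mathfrak{f}_n(x)}=\conjunto{x\in X:\dist{a}{f_n(x)}\leqslant\delta_n}$ is closed for each $a\in A_n$. I would prove this by observing that $x\mapsto\dist{a}{f_n(x)}$ is lower semicontinuous: since $f_n$ is continuous into $2^Y_u$, i.e. upper semicontinuous as a multivalued map, for $x'$ near $x$ the set $f_n(x')$ is contained in an arbitrarily small neighbourhood of $f_n(x)$, which forces $\dist{a}{f_n(x')}\geqslant\dist{a}{f_n(x)}-\eta$. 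Lower semicontinuity makes the sublevel set $\conjunto{x:\dist{a}{f_n(x)}\leqslant\delta_n}$ closed; this is exactly why the non-strict inequality $\leqslant\delta_n$ (rather than $<\delta_n$) appears in the definition. Equivalently and directly: given $V$ open with $\mathfrak{f}_n(x_0)\subseteq V$, the finitely many $a\in A_n\setminus V$ satisfy $\dist{a}{f_n(x_0)}>\delta_n$, and by lower semicontinuity this persists on a neighbourhood of $x_0$, on which $\mathfrak{f}_n\subseteq V$.

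It then remains to verify that $\conjunto{\mathfrak{f}_n}$ is an approximative map homotopic to $\conjunto{f_n}$. For the homotopy $f_n\simeq\mathfrak{f}_n$, I would feed the map $f_n\cup\mathfrak{f}_n$ (continuous by Lemma \ref{lem:unioncontinua}) into Lemma \ref{lem:hiperhomotopia}, since $f_n(x),\mathfrak{f}_n(x)\subseteq f_n(x)\cup\mathfrak{f}_n(x)$; the resulting homotopy takes values among $f_n(x)$, $\mathfrak{f}_n(x)$ and their union, of diameter at most $\diam(f_n(x))+2\delta_n$. Here I would use the key observation that $\conjunto{f_n}$ being approximative forces $\diam(f_n(x))\to 0$ uniformly in $x$ (for $n$ large, $f_n(x)$ lies in the basic neighbourhood $U_\varepsilon=\conjunto{C:\diam(C)<\varepsilon}$ of the canonical copy of $Y$). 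Thus, given $\varepsilon>0$, for all large $n$ one has $\diam(f_n(x))+2\delta_n<\varepsilon$, so $f_n\simeq\mathfrak{f}_n$ stays inside $U_\varepsilon$. The same estimate, together with the homotopies $f_n\simeq_{U_\varepsilon}f_{n+1}$ supplied by $\conjunto{f_n}$ and concatenation of homotopies that all remain in $U_\varepsilon$, yields the chain $\mathfrak{f}_n\simeq f_n\simeq f_{n+1}\simeq\mathfrak{f}_{n+1}$ inside $U_\varepsilon$ for $n$ large, proving simultaneously that $\conjunto{\mathfrak{f}_n}$ is approximative and homotopic to $\conjunto{f_n}$.

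The main obstacle I anticipate is the continuity of $\mathfrak{f}_n$: naive thresholds can destroy upper semicontinuity, and getting it right hinges on choosing the closed condition $\dist{a}{f_n(x)}\leqslant\delta_n$ and on recognizing that $x\mapsto\dist{a}{f_n(x)}$ is lower (not upper) semicontinuous precisely because $f_n$ is upper semicontinuous. Everything else — finiteness, non-emptiness, the diameter bounds and the homotopies — should then be routine given Lemmas \ref{lem:unioncontinua} and \ref{lem:hiperhomotopia} and the fact that approximative maps have images of vanishing diameter.
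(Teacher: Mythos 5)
Your proposal is correct and follows the same overall strategy as the paper: discretize $Y$ by finite nets, replace $f_n$ by its push-forward onto the net, bound $\diam\left(f_n(x)\cup\mathfrak{f}_n(x)\right)$ by $\diam(f_n(x))$ plus twice the mesh, build the homotopy $f_n\simeq_{U_\varepsilon}\mathfrak{f}_n$ from Lemmas \ref{lem:unioncontinua} and \ref{lem:hiperhomotopia}, and conclude with the chain $\mathfrak{f}_n\simeq_U f_n\simeq_U f_{n+1}\simeq_U\mathfrak{f}_{n+1}$, which simultaneously gives the approximative-map property and the homotopy. The one place where you genuinely diverge is the definition of the finite-type replacement and the continuity proof: the paper sets $\mathfrak{f}_n=r_{B_n}\cdot f_n$, where $r_{B_n}$ is the hyperspace extension of the nearest-point map $q_{B_n}$, and outsources continuity to a cited corollary of \cite{MMreconstruction}; you instead use the thickened assignment $\mathfrak{f}_n(x)=\left\lbrace a\in A_n:\dist{a}{f_n(x)}\leqslant\delta_n\right\rbrace$ and prove continuity from scratch via the lower semicontinuity of $x\mapsto\dist{a}{f_n(x)}$, which follows correctly from the upper semicontinuity of $f_n$. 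Your version is self-contained and correctly identifies the non-strict threshold as the point that makes the argument work; the paper's version is shorter because the continuity of $r_{B_n}$ is already available, and its exact-nearest-point images are contained in yours, so both constructions are homotopic and either serves the theorem.
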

\begin{proof}
Consider a decreasing sequence of positive real numbers $\{\beta_n\}_{\todon}$ converging to zero and a sequence of finite $\beta_n$-approximations $B_n$ of $Y$. Define, for every $\todon$, the map $\mathfrak{f}_n=r_{B_n}\cdot f_n$, where $r_{B_n}:2^Y_u\rightarrow 2^Y_u$, is the extension of the map $q_{B_n}:Y\rightarrow 2^Y_u$ and hence a continuous map (see \cite{MMreconstruction}, Corollary 1). It is clear that $\mathfrak{f}$ is continuous. Now, we need to show that $\{\mathfrak{f}_n\}_{\todon}$ is an approximative map and it is homotopic to $\{f_n\}_{\todon}$. We are going to prove both statements as consecuences of the following claim: for every open set $U\subset 2^Y_u$ containing the canonical copy of $Y$, there exists $n_0$ such that, for every $n\geqslant n_0$, $f_n\simeq_U \mathfrak{f}_n$. Indeed, let $U\subset 2^Y_u$ such an open set and consider, for every $\todon$, the diameter $D_n$ of the map $f_n$. Since $f_n$ is an approximative map, it is clear that the sequence $\{D_n\}_{\todon}$ converges to zero. The diameter of $\mathfrak{f}_n$ depends on $D_n$, for each $\todon$. For every $x\in X$, and for every two points $y_1, y_2\in f_n(x)$, consider $z_1\in B_n(y_1,\beta_n), z_2\in B_n(y_2,\beta_n)$. Then, we have
$$\dist{z_1}{z_2}\leqslant\dist{z_1}{y_1}+\dist{y_1}{y_2}+\dist{y_2}{z_2}<2\beta_n+D_n$$
and hence $\diam\left(\mathfrak{f}_n\right)<\beta_n+D_n$. Now, let $\varepsilon>0$ be a real number such that $U_{\varepsilon}\subset U$, and select $n_0$ such that $2\beta_n+D_n<\varepsilon$ for every $n\geqslant n_0$. Then, the map $H:X\times I\rightarrow U$, defined by 
\begin{equation*}
H(x,t) = \left\{
\begin{array}{rl}
f_n(x) & \text{if } t\in[0,\frac{1}{2}),\\
f_n(x)\cup \mathfrak{f}_n(x) & \text{if } t=\frac{1}{2},\\
\mathfrak{f}_n(x) & \text{if } t\in(\frac{1}{2},1],
\end{array}\right.
\end{equation*}
is continuous by Lemma \ref{lem:unioncontinua} and Lemma \ref{lem:hiperhomotopia}, and hence a homotopy between $f_n$ and $\mathfrak{f}_n$ in $U$. Moreover, being $\{f_n\}_{\todon}$ an approximative map, there exists $m_0$ such that, for every $n\geqslant m_0$, $f_n$ is homotopic to $f_{n+1}$ in $U$. Finally, for $n>\max{n_0,m_0}$, we have $$\mathfrak{f}_n\simeq_U f_n\simeq_U f_{n+1}\simeq_U \mathfrak{f}_{n+1},$$ which shows the two statements claimed to finish the proof.
\end{proof}

We finish this section by showing this result, also proposed in \cite{MCLepsilon} (Proposition 20), which establishes a deeper connection of the Main Construction with the shape of the space, because it takes into account the maps $p_{n,n+1}$ between the finite spaces.
\begin{prop}\label{teo:commuptohom}Let $X$ be a compact metric space and consider the sequences obtained with the Main Construction over $X$. The following diagram is commutative, up to homotopy, for every $n\in\mathbb{N}$.
$$\xymatrix@C=2cm@R=1.5cm{X\ar[r]^{id} \ar[d]_{q_{A_{n+1}}} & X \ar[d]^{q_{A_n}}\\
U_{2\epsilon_{n+1}}(A_{n+1})\ar[r]_{p_{n+1,n}} & U_{2\epsilon_n}(A_n).}$$
\end{prop}
\begin{proof}
To prove this commutativity we need a homotopy between the maps $p_{n,n+1}\cdot q_{A_{n+1}}$ and $q_{A_n}$. Such a homotopy is given by $H:X\times I\rightarrow U_{2\varepsilon_n}(A_n)$, with
\begin{equation*}
H(x,t) = \left\{
\begin{array}{rl}
q_{A_n}(x) & \text{if } t\in[0,\frac{1}{2}),\\
q_{A_n}(x)\cup p_{n,n+1}\cdot q_{A_{n+1}}(x) & \text{if } t=\frac{1}{2},\\
p_{n,n+1}\cdot q_{A_{n+1}}(x) & \text{if } t\in(\frac{1}{2},1].
\end{array}\right.
\end{equation*}
This is a homotopy because, by Lemma \ref{lem:distmc}, we have that \[\textrm{diam}\left(q_{A_n}(x)\cup p_{n,n+1}\cdot q_{A_{n+1}}(x)\right)<2\varepsilon_n,\] and hence it is well defined and using Lemma \ref{lem:unioncontinua} and Lemma \ref{lem:hiperhomotopia} we conclude that it is a homotopy between continuous maps and we are done.
\end{proof}

\paragraph{Acknowledgments} The author is grateful to his thesis advisor, M.A. Morón, for his guidance on the topic of the present article.
\paragraph{Funding} This work has been partially supported by the research project PGC2018-098321-B-I00(MICINN). The author has been also supported by the FPI Grant BES-2010-033740 of the project MTM2009-07030 (MICINN).

\addcontentsline{toc}{chapter}{References}
\bibliographystyle{siam}
\bibliography{bibFinmul}

\vspace{1cm}
\noindent\textsc{Diego Mondéjar}\\
Departamento de Métodos Cuantitativos, CUNEF Universidad, 28040 Madrid, Spain\\
\texttt{diego.mondejar@cunef.edu}

\end{document}